\newtheorem{theorem}{Theorem}[section]
\newtheorem{definition}[theorem]{Definition}
\newtheorem{lemma}[theorem]{Lemma}
\newtheorem{proposition}[theorem]{Proposition}
\theoremstyle{definition}
\newcommand{\RR}{\mathbb{R}}
\title[Periodic solutions to a forced Kepler problem in the plane]
{Periodic solutions to a forced Kepler problem in the plane}
\author{Alberto Boscaggin, Walter Dambrosio and Duccio Papini}
\address{Alberto Boscaggin and Walter Dambrosio\newline \indent
 Dipartimento di Matematica ``Giuseppe Peano'', \newline \indent
Universit\`a di Torino, \newline \indent
Via Carlo Alberto, 10,
10123 Torino, Italy \newline \newline \indent
Duccio Papini \newline \indent
 Dipartimento di Scienze Matematiche, Informatiche e Fisiche, \newline \indent
Universit\`a di Udine, \newline \indent
Via delle Scienze, 206,
33100 Udine, Italy
}
\email{alberto.boscaggin@unito.it}
\email{walter.dambrosio@unito.it}
\email{duccio.papini@uniud.it}
\date{}
\begin{document}

\begin{abstract}
Given a smooth function $U(t,x)$, $T$-periodic in the first variable and satisfying $U(t,x) = \mathcal{O}(\vert x \vert^{\alpha})$ for some $\alpha \in (0,2)$ as $\vert x \vert \to \infty$, we prove that the forced Kepler problem
$$
\ddot x = - \dfrac{x}{|x|^3} + \nabla_x U(t,x),\qquad x\in \RR^2,
$$
has a generalized $T$-periodic solution, according to the definition given in the paper [Boscaggin, Ortega, Zhao, \emph{Periodic solutions and regularization of a Kepler problem with time-dependent perturbation}, Trans. Amer. Math. Soc, 2018]. The proof relies on variational arguments.
\end{abstract}

\date{\today}
\keywords{Kepler problem; periodic solutions; collisions; variational methods.}
\subjclass{37J45, 70B05, 70F16. }

\thanks{{\bf Acknowlegments.} Work partially supported by the 
ERC Advanced Grant 2013 n. 339958
{\it Complex Patterns for Strongly Interacting Dynamical Systems - COMPAT}, by the INDAM-GNAMPA Projects
\textit{Dinamiche complesse per il problema degli $N$-centri} and \textit{Propriet\`a qualitative di alcuni problemi ai limiti}
and by the project PRID \textit{SiDiA – Sistemi Dinamici e Applicazioni} of the DMIF - Universit\`a di Udine}

\maketitle
\medbreak

\section{Introduction and statement of the main result}
\setcounter{page}{1}

In this paper we investigate the existence of $T$-periodic solutions to the equation
\begin{equation} \label{eq-kepleromain}
\ddot x = - \dfrac{x}{|x|^3} + \nabla_x U(t,x),\qquad x\in \RR^2,
\end{equation}
where $U: \mathbb{R} \times \mathbb{R}^2 \to \mathbb{R}$ is a (smooth) function $T$-periodic in its first variable (for some $T > 0$).
A special interesting case occurs when $ U(t,x) = \langle p(t), x \rangle $ for some $T$-periodic forcing term
$ p $, which gives rise to the equation
\begin{equation}\label{eq-keplerofor}
\ddot x = - \dfrac{x}{|x|^3} + p(t),\qquad x\in \RR^2.
\end{equation}
As well known, equation \eqref{eq-kepleromain} models the motion of a massless particle $x\in \RR^2$ subject to the action of both the gravitational force and an external force with potential $U(t,x)$; accordingly, it can be meant as a (time-periodically) forced Kepler problem. 
In spite of its simple looking structure, such an equation possesses some peculiar features making it a quite paradigmatic model for the methods of Nonlinear Analysis and Dynamical Systems. In particular, as typical in problems of Celestial Mechanics, 
the possibility for a solution to approach the collision set $\{x = 0\}$ has to be taken into account, leading to substantial difficulties.

To the best of our knowledge, most of the results available up to now have been proved in a perturbative setting, namely, for the equation
\begin{equation} \label{eq-kepleroper}
\ddot x = - \dfrac{x}{|x|^3} + \varepsilon \,\nabla_x U(t,x),\qquad x\in \RR^2,
\end{equation}
with $\varepsilon$ small enough, see \cite{AmbCot89,BosOrt16,CaVi00,FonGal17,FonGal18,FoToTo12} and the references therein. In such a case, classical (i.e., without collisions) $T$-periodic solutions are found, for $\varepsilon$ small enough, near the ones of the unperturbed Kepler problem ($\varepsilon = 0$), via perturbative techniques. Also this situation, however, is far from being trivial, since the peculiar degeneracies of the Kepler problem rule out the possibility of using the standard perturbation theory of completely integrable Hamiltonian systems. As a matter of fact, one is typically led to assume some symmetry conditions on the potential $U(t,x)$, eventually ruling out the simple case of equation \eqref{eq-keplerofor}. 
We also mention the paper \cite{AmHaOrUr11} in which the case $\varepsilon$ large is considered.

As far as equation \eqref{eq-kepleromain} is concerned, some results were given in \cite{SerTer94}.
In that paper, global variational methods are used, requiring the development of delicate action level estimates for solutions approaching the origin. In order for this procedure to work so as to prevent the occurrence of collisions, again some symmetry conditions on the potential are imposed and equation \eqref{eq-keplerofor} is left out from the analysis therein.

Recently, a different point of view has been proposed in \cite{BosOrtZhaPP}, where a suitable definition of \emph{generalized} solution to \eqref{eq-kepleromain} was given.
We recall it below for the reader's convenience.

\begin{definition}\label{def:bs}
A \emph{generalized $T$-periodic solution} to \eqref{eq-kepleromain} is a continuous and $T$-periodic function $x: \mathbb{R} \to \mathbb{R}^2$ satisfying the following conditions:
\begin{itemize}
\item[(i)] the set $E_x := \{t \in \mathbb{R} : x(t) = 0\}$ of collision instants is discrete,
\item[(ii)] for any open interval $I \subset \mathbb{R} \setminus E_x$, the function $x$ is $\mathcal{C}^2(I)$ and satisfies 
\eqref{eq-kepleromain} on $I$, 
\item[(iii)] for any $t_0 \in E_x$, the limits
$$
\lim_{t \to t_0} \frac{x(t)}{\vert x(t) \vert} \qquad \mbox{and} \qquad \lim_{t \to t_0}\left( \frac12 \vert \dot x(t) \vert^2- \frac{1}{\vert x(t) \vert}\right)
$$
exist and are finite.
\end{itemize}
\end{definition}

The possibility of considering solutions attaining the value $x = 0$ was already discussed by various authors (see, for instance, 
\cite{AmbCot93,BahRab89,Tan94}). However, while in these papers a generalized solution is just meant as an $H^1$-function attaining the value $x = 0$ on a zero-measure set (and solving the equation on the complementary set), Definition \ref{def:bs} requires a precise behavior at the collisions instants: that is, both the collision direction $\tfrac{x(t)}{\vert x(t) \vert}$ and the collision energy $\tfrac12 \vert \dot x(t) \vert^2- \tfrac{1}{\vert x(t) \vert}$ are continuous functions. As shown in \cite{BosOrtZhaPP}, this is a very natural definition of solution for equation \eqref{eq-kepleromain}, since it corresponds to the notion of solution provided by the well known Levi-Civita regularization for the planar Kepler problem
(see \cite{Wa2010} for some basic references about the theory of regularization in Celestial Mechanics and \cite{MaOrRe14} for an application of regularization techniques to a Kepler problem with linear
drag). 

Using Levi-Civita regularization together with a delicate bifurcation theory from (fixed-energy) periodic manifolds of autonomous Hamiltonian systems \cite{Wei73}, a universal existence result can be proved for equation \eqref{eq-kepleroper}: precisely, with 
no assumptions (but the smoothness) on the potential $U(t,x)$, a generalized $T$-periodic solution always exists (see \cite[Theorem 3.1]{BosOrtZhaPP} for a more precise statement). 

The aim of this brief paper is to extend such an existence result to a non-perturbative setting. Precisely, we are going to prove the following theorem.

\begin{theorem} \label{teo-main}
Let $U: \mathbb{R} \times \mathbb{R}^2 \to \mathbb{R}$ be a $\mathcal{C}^1$ function, $T$-periodic in the first variable (for some $T > 0$); moreover, suppose that, for some $C > 0$ and $\alpha \in (0,2)$,
\begin{equation}\label{eq-subquadratic}
\vert U(t,x) \vert \leq C (1 + \vert x \vert^{\alpha}), \quad \mbox{ for every } (t,x) \in \mathbb{R} \times \mathbb{R}^2.
\end{equation}
Then, there exists at least one generalized $T$-periodic solution to \eqref{eq-kepleromain}.
\end{theorem}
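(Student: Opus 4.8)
The plan is to look for critical points of the Lagrangian action
\[
\mathcal A(x) \;=\; \int_0^T\Bigl(\tfrac12\,|\dot x(t)|^2 + \frac{1}{|x(t)|} + U(t,x(t))\Bigr)\,dt,
\]
on the space $\Lambda := H^1(\mathbb R/T\mathbb Z;\mathbb R^2)$ of $T$-periodic loops, with the convention $\mathcal A(x)=+\infty$ whenever $1/|x|\notin L^1(0,T)$. Since \eqref{eq-subquadratic} allows $U$ to be unbounded from below, $\mathcal A$ is \emph{not} bounded below on $\Lambda$ --- constant loops escaping to infinity may carry action tending to $-\infty$ --- so a plain minimization over $\Lambda$ cannot work. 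The remedy is a topological constraint: let $\Lambda_1\subset\Lambda$ be the (open) set of loops that never hit the origin and wind exactly once around it. For $x\in\Lambda_1$ a non-contractibility argument bounds the sup-norm by the derivative, $\|x\|_\infty\le C_T\,\|\dot x\|_{L^2}$; together with \eqref{eq-subquadratic}, the assumption $\alpha<2$ and the positivity of $1/|x|$, this yields $\mathcal A(x)\ge\tfrac14\|\dot x\|_{L^2}^2-C$, so $\mathcal A$ is coercive on $\Lambda_1$ for the $H^1$-norm. Moreover $\mathcal A$ is finite on $\Lambda_1$ (evaluate it on a small circle) and sequentially weakly lower semicontinuous on $\Lambda$ (the kinetic term by convexity, the Keplerian term by Fatou, the $U$-term by continuity, the last two using the compact embedding $H^1\hookrightarrow\mathcal C^0$).

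Hence a minimizing sequence for $\mathcal A$ on $\Lambda_1$ is bounded in $\Lambda$, and along a subsequence it converges weakly in $\Lambda$ and uniformly to some $\bar x$ with $\mathcal A(\bar x)\le\inf_{\Lambda_1}\mathcal A<+\infty$; in particular $\int_0^T 1/|\bar x|\,dt<+\infty$, so $\bar x\not\equiv0$ and the collision set $E_{\bar x}$ has measure zero. Two cases arise. If $\bar x$ never vanishes, it still winds once around the origin, hence minimizes $\mathcal A$ over the open set $\Lambda_1$, so it is a free critical point and, by elliptic bootstrap, a \emph{classical} $T$-periodic solution of \eqref{eq-kepleromain} --- and we are done. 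Otherwise $\bar x$ vanishes at some instants, and a standard argument (perturbing the minimizing sequence by variations supported where $\bar x\ne0$) shows that $\bar x$ solves \eqref{eq-kepleromain} in the classical sense on every open interval of $\mathbb R\setminus E_{\bar x}$, with $\bar x\in\mathcal C^2$ there since $U\in\mathcal C^1$. This yields condition (ii) of Definition \ref{def:bs} and reduces everything to the behavior at the collisions.

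The core of the proof is this collision analysis. Finiteness of $\int_0^T 1/|\bar x|$ already prevents $\bar x$ from lingering near the origin. Near a collision $t_0$, the mechanical energy $h(t):=\tfrac12|\dot{\bar x}(t)|^2-1/|\bar x(t)|$ satisfies $\dot h=\langle\nabla_x U(t,\bar x),\dot{\bar x}\rangle$, whence $|\dot h|\le C\,|\dot{\bar x}|\in L^1$, so $h$ has finite one-sided limits at $t_0$; the angular momentum $L:=\bar x\wedge\dot{\bar x}$ obeys $\dot L=\bar x\wedge\nabla_x U(t,\bar x)=\mathcal O(|\bar x|)\to0$, so $L(t)$ converges, and the radial identity $\tfrac12\dot r^2=h+1/r-L^2/(2r^2)\ge0$ with $r=|\bar x|\to0$ forces the limit of $L$ to vanish: \emph{every collision is total}. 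From here one recovers, as for the unperturbed Kepler problem, the Sundman asymptotics $|\bar x(t)|\asymp|t-t_0|^{2/3}$. The clean way to organize all this rigorously is the Levi--Civita substitution $x=z^2$, $dt=|z|^2\,ds$: on a punctured neighbourhood of $t_0$ the solution $\bar x$ lifts to a solution of a \emph{regular} second order equation, the finite-action condition becoming finiteness of the new-time length; the energy bound makes $z$ Lipschitz in $s$, so it extends through the collision as a $\mathcal C^2$ solution with $z'\ne0$ there (the square-root branch being flipped across the collision). From this one reads off directly: each zero of $z$ is simple, hence the collisions are isolated --- condition (i); and, since $\bar x/|\bar x|=(z/|z|)^2$ and $h=(2|z'|^2-1)/|z|^2$, both the collision direction and the collision energy have finite two-sided limits at $t_0$ --- condition (iii).

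Putting everything together, $\bar x$ is a generalized $T$-periodic solution in the sense of Definition \ref{def:bs}, and a classical one in the collision-free case. The soft parts are the variational setup and the coercivity inside the homotopy class; the main obstacle is the third step --- transferring the quantitative Kepler collision asymptotics to the time-forced equation (via the Levi--Civita regularization) and proving that a variational, a priori merely $H^1$, minimizer exhibits precisely the refined collision behavior (discrete collisions, collision direction and collision energy continuous across each collision) demanded by Definition \ref{def:bs}.
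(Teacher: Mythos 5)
Your variational setup (minimizing the action over the weakly closed class obtained from loops with nontrivial winding number, coercivity via a Gordon/Poincar\'e-type inequality inside the homotopy class, weak lower semicontinuity, and the observation that a non-vanishing minimizer is a classical solution while a vanishing one solves the equation away from a measure-zero collision set) is essentially the paper's Section 2, and your derivation of one-sided energy limits, vanishing angular momentum at a collision, and the Sundman--Sperling asymptotics giving one-sided limits of $x/|x|$ also matches the easy half of the paper's Section 3.

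However, there is a genuine gap at the decisive step: you never prove that the \emph{two-sided} limits in condition (iii) of Definition \ref{def:bs} exist, i.e.\ that the ingoing and outgoing collision directions coincide ($x_0^+=x_0^-$) and that the left and right energy limits agree. You delegate this to the Levi--Civita substitution, asserting that the Lipschitz bound on $z$ lets it ``extend through the collision as a $\mathcal{C}^2$ solution with $z'\neq 0$''; but the substitution only produces a regular solution on each \emph{one-sided} branch, and nothing in your argument forces the two branches to glue with matching derivative. Indeed, a ``bouncing'' trajectory such as $t\mapsto (9/2)^{1/3}|t|^{2/3}e^{\pm}$ with $e^+\neq e^-$ (for the unforced problem) satisfies every property you establish --- finite action, bounded energy, one-sided Sperling asymptotics, isolated collision, Lipschitz Levi--Civita lift on each side --- yet is not a generalized solution. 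Excluding it requires using minimality \emph{at} the collision, which your collision analysis never does. The paper's entire technical core is exactly this: (a) an inner-variation (time-reparametrization) argument showing the energy $h_x$ lies in $W^{1,1}(0,T)$, hence is continuous across collisions; and (b) a blow-up analysis showing that the rescaled action of the minimizer near a collision is asymptotically at least $\varphi_0=4\sqrt[6]{8}$, the action of the parabolic ejection--collision arc, combined with Marchal's estimate that the direct and indirect Keplerian arcs joining $x_0^-$ to $x_0^+$ in time $2s_0$ both have action strictly below $\varphi_0$; replacing the near-collision piece by one of these arcs (at least one of which respects the topological constraint, since the two arcs are non-homotopic in the punctured plane) would strictly lower the action, contradicting minimality unless $x_0^+=x_0^-$. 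Your proposal contains no counterpart of either (a) or (b), so the claimed verification of condition (iii) --- and, as a by-product, your derivation of condition (i) from simplicity of the zeros of $z$ --- does not stand as written.
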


In particular, a generalized $T$-periodic solution to \eqref{eq-keplerofor} exists, for any $T$-periodic function $p$ of class $\mathcal{C}^1$.
Incidentally, we mention that existence and multiplicity of generalized $T$-periodic and quasi-periodic solutions to the one-dimensional forced Kepler problem
$$
\ddot x = - \dfrac{x}{|x|^3} + p(t),\qquad x\in \RR^+ := [0,+\infty),
$$
was previously investigated in \cite{Ort11,RebSimPP,Zha16}, using the Poincar\'e-Birkhoff fixed point theorem and KAM theory.

The proof of Theorem \ref{teo-main} relies on a variational argument.
This kind of apporach has been used also for other equations in Celestial Mechanics, see e.g. \cite{AmbCot89,AmbCot93,BahRab89,BaFeTe08,BarTerVer14,BosBotDam18,BosDamPap18,BosDamTer17,Cas,FuGrNe,MadVen09,SerTer94,SoaTer12,Tan93b,TeVe,Yu16} and the references therein.
First, in Section \ref{sec2} we minimize the action functional associated with \eqref{eq-kepleromain} on the weak closure of $H^1$-loops with nontrivial winding number around the origin: as well known (see \cite{Go-77}), this topological constraint provides the needed coercivity, so that a minimum exists by the direct method of calculus of variations.
Then, in Section \ref{sec3} we investigate the behavior of the above found minimum near its possible collisions, so as to prove that it corresponds to a generalized $T$-periodic solution according to Definition \ref{def:bs}. The hardest part of this step is to show that the ingoing and outgoing collision directions must coincide (that is, the existence of the first limit in condition (iii)): we prove this via a blow-up analysis, eventually relying on a well-known action level estimate for the direct and indirect Keplerian arc (see Lemma~\ref{prop-archi}).    

\section{Minimizing the action functional}\label{sec2}

In this section we prove the existence of a minimum, in a suitable class of functions, of the action functional associated with \eqref{eq-kepleromain}. 

To this end, for every continuous function $x:[0,T]\to \RR^2\setminus \{0\}$ such that $x(0)=x(T)$, we first denote by $r_x$ the winding number of $x$ around the origin, that is, writing in polar coordinates $x(t) = \rho(t) e^{i\theta(t)}$, with $\rho(t) > 0$, 
$$
r_x = \frac{\theta(T) - \theta(0)}{2\pi}.
$$
Denoting by $H^1_T$ the Sobolev space of $H^1$-functions $x: [0,T] \to \mathbb{R}^2$ satisfying $x(0) = x(T)$, let us define
$$
{\mathcal X}_c=\left\{x\in H^1_T: \ \exists \ t_0\in [0,T] \mbox{ such that } x(t_0)=0\right\},
$$
$$
{\mathcal X}_r=\left\{x\in H^1_T: x(t) \neq 0\; \forall t \in[0,T] \mbox{ and } r_x\neq 0\right\}
$$
and 
$$
{\mathcal X}={\mathcal X}_c\cup {\mathcal X}_r.
$$
It is easy to see that $\mathcal X$ is sequentially weakly closed in $H^1_T$; moreover, in the set $\mathcal X$ a Poincar\'e-type inequality holds true, as proved below (see also \cite{Go-77}).

\begin{proposition} \label{prop-poincare} 
	There exists $K>0$ such that
	\begin{equation} \label{eq-disugpoincare}
	\int_0^T |x(t)|^2\, dt \leq K\, \int_0^T |\dot{x}(t)|^2\,dt,\quad \forall \ x\in {\mathcal X}.
	\end{equation}
\end{proposition}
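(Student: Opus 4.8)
The plan is to reduce the vector-valued inequality \eqref{eq-disugpoincare} to a pair of scalar Poincaré inequalities, one for each Cartesian component of $x=(x_1,x_2)$, by exploiting the fact that on $\mathcal{X}$ each component must vanish somewhere on $[0,T]$. The only genuinely geometric ingredient is the claim that, for every $x\in\mathcal{X}$ and each $i\in\{1,2\}$, the function $x_i$ has at least one zero in $[0,T]$; once this is in hand, the rest is a routine application of the fundamental theorem of calculus and the Cauchy--Schwarz inequality.

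First I would prove this zero-existence claim. For $x\in\mathcal{X}_c$ it is immediate: by definition $x(t_0)=0$ for some $t_0$, hence $x_1(t_0)=x_2(t_0)=0$. For $x\in\mathcal{X}_r$, I argue by contradiction: if, say, $x_1$ never vanished on $[0,T]$, then (being continuous and $T$-periodic) it would have constant sign, so the loop $x$ would lie entirely in one of the two convex half-planes $\{x_1>0\}$ or $\{x_1<0\}$, neither of which contains the origin; then $x$ would be null-homotopic in $\mathbb{R}^2\setminus\{0\}$, contradicting $r_x\neq 0$. The same reasoning applies to $x_2$. (Equivalently, on $\mathcal{X}_r$ one may lift $x=\rho e^{i\theta}$ to a continuous argument $\theta$ and note that $|\theta(T)-\theta(0)|=2\pi|r_x|\geq 2\pi$, so $\theta$ attains every value in an interval of length $2\pi$, in particular two values $\equiv \pi/2\pmod{\pi}$, where $x_1=0$, and likewise for $x_2$.)

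Next, fix $i\in\{1,2\}$ and let $\tau_i\in[0,T]$ be such that $x_i(\tau_i)=0$. Using $T$-periodicity to regard $x_i$ as defined on all of $\mathbb{R}$, I would write, for every $t$,
$$
|x_i(t)| = \left| \int_{\tau_i}^{t} \dot x_i(s)\,ds \right| \leq \int_{\tau_i}^{\tau_i+T} |\dot x_i(s)|\,ds = \int_0^T |\dot x_i(s)|\,ds \leq \sqrt{T}\left(\int_0^T |\dot x_i(s)|^2\,ds\right)^{1/2},
$$
so that $\|x_i\|_{L^\infty(0,T)}^2 \leq T\int_0^T |\dot x_i|^2$ and hence $\int_0^T |x_i(t)|^2\,dt \leq T^2 \int_0^T |\dot x_i(t)|^2\,dt$. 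Summing over $i=1,2$ yields \eqref{eq-disugpoincare} with $K=T^2$.

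The argument is elementary, and I do not expect a serious obstacle; the one point that carries content is the topological observation in the second paragraph, which is precisely what couples the winding-number constraint defining $\mathcal{X}_r$ to the scalar estimates (and which is the reason the two pieces $\mathcal{X}_c$ and $\mathcal{X}_r$ can be treated uniformly). The constant $K=T^2$ is of course not optimal — a sharper value follows from the classical one-dimensional Wirtinger/Poincaré inequality for mean-zero or pointwise-vanishing functions — but any explicit $K$ suffices for the coercivity needed in Section~\ref{sec2}.
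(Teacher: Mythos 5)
Your proof is correct, but it handles the set $\mathcal{X}_r$ by a genuinely different route than the paper. The paper treats $\mathcal{X}_c$ exactly as you do (integrate $\dot x$ from a zero of $x$ and apply Cauchy--Schwarz), but for $\mathcal{X}_r$ it works in polar coordinates: writing $|\dot x|^2 = \dot\rho^2 + \rho^2\dot\theta^2$, it uses the winding-number condition quantitatively through $\int_0^T|\dot\theta|\,dt \ge 2\pi > 1$ to bound both $x_M - x_m$ and $x_m$ by $\bigl(T\int_0^T|\dot x|^2\bigr)^{1/2}$, arriving at $K = 2T^2$. You instead use the winding number only qualitatively -- a loop with $r_x \neq 0$ cannot stay in an open half-plane, so each Cartesian component $x_i$ must vanish somewhere -- which reduces $\mathcal{X}_r$ to the same one-dimensional estimate as $\mathcal{X}_c$ applied componentwise. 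Your topological step is sound (a loop in a convex set missing the origin is null-homotopic in $\mathbb{R}^2\setminus\{0\}$), and the reduction buys you a unified treatment of the two pieces of $\mathcal{X}$, a marginally better constant $K = T^2$, and the avoidance of the (standard but slightly technical) polar lifting of an $H^1$ loop; the paper's polar-coordinate computation, on the other hand, is the classical Gordon-style argument and introduces notation ($\rho$, $\theta$, $x_m$, $x_M$) that is natural for this class of singular problems. Either argument fully suffices for the coercivity needed in Section~\ref{sec2}.
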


\begin{proof}
The result is well-known if $x \in {\mathcal X}_c$, since one can write $x(t) = \int_{t_0}^t x(s)\,ds$ (with $x(t_0) = 0$)
and use Cauchy-Scwhartz inequality so as to easily prove \eqref{eq-disugpoincare} with $K = T^2$.

As for ${\mathcal X}_r$, a little more work is needed. For any $x\in {\mathcal X}_r$, we introduce the notation 
\[
x_M=\max_{t\in [0,T]} |x(t)|,\quad x_m=\min_{t\in [0,T]} |x(t)|;
\]
let us observe that $x_m>0$, by definition of ${\mathcal X}_r$. We write $x=\rho e^{i\theta}$, with $\rho = \vert x \vert$, in such a way that
\[
\dot{x}=\dot{\rho} e^{i\theta}+i\rho \dot{\theta}e^{i\theta}
\]
and
$$
|\dot{x}|^2=|\dot{\rho}|^2+\rho^2|\dot{\theta}|^2.
$$
It is immediate so see that
\begin{equation} \label{eq-stimaxquadro}
\int_0^T |x(t)|^2\, dt\leq T\, x_M^2\leq 2T\, (x_M-x_m)^2+2T\, x_m^2;
\end{equation}
moreover, using Cauchy-Schwartz inequality together with elementary estimates, we can obtain
\begin{equation} \label{eq-stimaxpuntoquadro}
\begin{aligned}
\int_0^T |\dot{x}(t)|^2\, dt
&\geq \int_0^T |\dot{\rho}(t)|^2\, dt+x_m^2\, \int_0^T |\dot{\theta}(t)|^2\, dt \\
&\geq \dfrac{1}{T} \left(\int_0^T |\dot{\rho}(t)|\, dt\right)^2+\dfrac{x^2_m}{T} \left(\int_0^T |\dot{\theta}(t)|\, dt\right)^2 \\
&\geq \dfrac{1}{T} \left(x_M-x_m\right)^2+\dfrac{x_m^2}{T} \left(\int_0^T |\dot{\theta}(t)|\, dt\right)^2.
\end{aligned}
\end{equation}
Now, taking into account that $x\in {\mathcal X}_r$, we infer that there exists $k\in \mathbb{Z}$, $k\neq 0$, such that
\[
\theta(T)-\theta(0)=2k\pi,
\]
thus obtaining
\[
\int_0^T |\dot{\theta}(t)|\, dt\geq |\theta(T)-\theta(0)|\geq 2\pi> 1.
\]
From this relation and from \eqref{eq-stimaxpuntoquadro} we deduce that
\begin{equation} \label{eq-stimaxpuntoquadro2}
\begin{array}{l}
\displaystyle \int_0^T |\dot{x}(t)|^2\, dt\geq \dfrac{1}{T}\, \left(x_M-x_m\right)^2+\dfrac{1}{T}\, x_m^2.
\end{array}
\end{equation}
Comparing \eqref{eq-stimaxpuntoquadro2} with \eqref{eq-stimaxquadro}, we plainly conclude that \eqref{eq-disugpoincare} holds true with $K=2T^2$.
\end{proof}

Now, for every $[a,b]\subset [0,T]$ let us define $A_{[a,b]}:{\mathcal X}\to (-\infty, +\infty]$ by
$$
A_{[a,b]}(x)=\int_{a}^{b} \left(\dfrac{1}{2}|\dot{x}(t)|^2+\dfrac{1}{|x(t)|}+ U(t,x(t)) \right)dt,
\quad \forall \ x\in {\mathcal X},
$$
and denote $A_T=A_{[0,T]}$.
From Proposition~\ref{prop-poincare} and assumption \eqref{eq-subquadratic} we deduce that there exist
 $ K' > 0 $ such that:
\[
\begin{aligned}
A_{T}(x) & \ge \int_{0}^{T} \left( \dfrac{|\dot{x}(t)|^{2}}{4}
+ \dfrac{1}{4K} |x(t)|^{2} - C |x(t)|^{\alpha} \right) dt - CT \\
& \ge \int_{0}^{T} \left( \dfrac{|\dot{x}(t)|^{2}}{4}
+ \dfrac{1}{8K} |x(t)|^{2} \right) dt - (K'+C)T,
\end{aligned}
\]
for every $ x \in \mathcal{X} $.
This inequality implies that $ A_{T} $ is coercive on $ \mathcal{X} $ and, therefore, we have the following.
\begin{theorem} \label{teo-minimoglobale}
	There exists $x\in {\mathcal X}$ such that
	\[
	A_T(x)=\min_{y\in {\mathcal X}} A_T(y).
	\]
\end{theorem}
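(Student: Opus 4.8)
The plan is to apply the direct method of the calculus of variations to the functional $A_T$ on the set $\mathcal{X}$. Two of the structural ingredients have already been recorded: $\mathcal{X}$ is sequentially weakly closed in $H^1_T$, and the inequality displayed just before the statement shows that $A_T$ is coercive on $\mathcal{X}$; more precisely, that inequality controls the \emph{full} $H^1_T$-norm of $x$ from below (up to an additive constant), which is exactly what is needed to extract weakly convergent minimizing subsequences. What remains is to check that $\inf_{\mathcal{X}} A_T$ is a real number and that $A_T$ behaves lower semicontinuously along weakly convergent sequences in $\mathcal{X}$.

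First I would check that $\ell := \inf_{\mathcal{X}} A_T$ is finite. The coercivity inequality already gives $A_T \ge -(K'+C)T$ on $\mathcal{X}$, so $\ell > -\infty$. For the reverse bound, the curve $x_0(t) = (\cos(2\pi t/T),\sin(2\pi t/T))$ lies in $\mathcal{X}_r \subset \mathcal{X}$ — it is of class $H^1_T$, it never vanishes, and its winding number is $1$ — and, thanks to \eqref{eq-subquadratic}, $A_T(x_0) < +\infty$; hence $\ell \in \mathbb{R}$.

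Then I would take a minimizing sequence $(x_n) \subset \mathcal{X}$, $A_T(x_n) \to \ell$. By coercivity, $(x_n)$ is bounded in $H^1_T$, so up to a subsequence $x_n \rightharpoonup x$ weakly in $H^1_T$ and, by the compact embedding $H^1_T \hookrightarrow \mathcal{C}([0,T];\mathbb{R}^2)$, also $x_n \to x$ uniformly on $[0,T]$; moreover $x \in \mathcal{X}$ by weak closedness. Now I would pass to the limit term by term: the kinetic functional $x \mapsto \tfrac12\int_0^T |\dot x|^2\,dt$ is convex and strongly continuous on $H^1_T$, hence weakly lower semicontinuous, so $\liminf_n \int_0^T |\dot x_n|^2\,dt \ge \int_0^T |\dot x|^2\,dt$; for the singular term, $\tfrac{1}{|x_n(t)|} \to \tfrac{1}{|x(t)|}$ for every $t$ (with value $+\infty$ at a collision instant of $x$, if there is one), so Fatou's lemma yields $\liminf_n \int_0^T \tfrac{dt}{|x_n(t)|} \ge \int_0^T \tfrac{dt}{|x(t)|}$; finally, since $\sup_n \|x_n\|_{\infty} < +\infty$, the bound \eqref{eq-subquadratic} makes $U(t,x_n(t))$ uniformly bounded while $U(t,x_n(t)) \to U(t,x(t))$ pointwise by continuity of $U$, so dominated convergence gives $\int_0^T U(t,x_n(t))\,dt \to \int_0^T U(t,x(t))\,dt$. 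Adding these up, $A_T(x) \le \liminf_n A_T(x_n) = \ell$, whence $A_T(x) = \ell = \min_{\mathcal{X}} A_T$.

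The only genuinely delicate point in this scheme is the behavior of the Keplerian term $\int_0^T \tfrac{dt}{|x(t)|}$ under weak convergence, since the limit curve may hit the origin; but this is harmless for \emph{lower} semicontinuity, and it is precisely handled by the compact embedding into $\mathcal{C}([0,T];\mathbb{R}^2)$ together with Fatou's lemma. Everything else is a routine application of the direct method.
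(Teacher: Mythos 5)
Your proposal is correct and follows exactly the route the paper takes: the paper derives the same coercivity estimate, notes that $\mathcal{X}$ is sequentially weakly closed, and then simply invokes the direct method, leaving implicit the standard details (finiteness of the infimum, weak lower semicontinuity of the kinetic term, Fatou for the singular term, and dominated convergence for the potential term) that you have written out. No discrepancies to report.
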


Of course, $x$ is a classical solution of \eqref{eq-kepleromain} if $x\in {\mathcal X}_r$. 

\section{Exploring collisions}\label{sec3}

In this section we assume that the minimum $x$ given by Theorem \ref{teo-minimoglobale} lies in the set ${\mathcal X}_c$ and we to prove that it is a generalized solution of \eqref{eq-kepleromain}, according to Definition \ref{def:bs}.

To this end, we perform a study of the local behavior of $x \in {\mathcal X}_c$ near its collisions. As in condition i) 
of Definition \ref{def:bs}, let
\[
E_x=\{t\in [0,T]: \ x(t)=0\}
\]
the set of collision instants of $x$. From the condition
\[
-\infty<A_T(x)<+\infty
\]
we deduce that $E_x$ has zero measure; taking into account that $E_x$ is closed, by the continuity of $x$, we infer that $[0,T]\setminus E_x$ is the (at most countable) union of open intervals $(a_n,b_n)$, $n\geq 0$, and that $x\in \mathcal{C}^{2}(a_n,b_n)$ satisfies
$$
\ddot{x} = -\dfrac{x}{|x|^3} + \nabla_{x} U(t,x), \quad \forall \ t\in (a_n,b_n),\quad \forall \ n\geq 0.
$$
Defining 
\begin{equation} \label{eq-defenergia}
h_x(t)=\dfrac{1}{2}|\dot{x}(t)|^2-\dfrac{1}{|x(t)|},\quad \forall \ t\in [0,T]\setminus E_x,
\end{equation}
and
$$
I_x(t)=\dfrac{1}{2}|x(t)|^2,\quad \forall \ t\in [0,T],
$$
it is immediate to see that in the open set $[0,T]\setminus E_x$ the so-called virial identity
\begin{equation} \label{eq-LJ}
\ddot{I_x}(t)=\dfrac{1}{|x(t)|}+ U(t,x(t)) +2h_x(t),\quad \forall \ t\in [0,T]\setminus E_x,
\end{equation}
holds true. 

\subsection{The energy function and the number of collisions}

The local study of the energy function $h_x$ defined in \eqref{eq-defenergia} near collisions moves 
from the relation
\[
\int_0^T |h_x(t)|\,dt\leq \int_0^T \left(\dfrac{1}{2}|\dot{x}(t)|^2+\dfrac{1}{|x(t)|}\right)dt =A_T(x)-\int_0^T  U(t,x(t))\, dt,
\]
which implies that $h_x\in L^1(0,T)$. In the next result (following a computation in \cite{Chenciner02} dealing with the autonomous case), we show that the minimality of $x$ implies that $h_x$ can be extended to a continuous function in all $[0,T]$.

\begin{proposition} \label{prop-energia}
	Let $x\in {\mathcal X}_c$ be a minimizer of $A_T$ in $\mathcal X$. Then the energy $h_x$ defined in \eqref{eq-defenergia} belongs to $ W^{1,1}(0,T) $ and, therefore, can be extended to a continuous function in $[0,T]$.
\end{proposition}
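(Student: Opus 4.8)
The plan is to exploit the first variation of the action at the minimizer $x$ in order to show that $h_x$ has an $L^1$ weak derivative on all of $[0,T]$, which (together with $h_x\in L^1$ already established) gives $h_x\in W^{1,1}(0,T)$ and hence a continuous representative. The natural class of variations to use is \emph{time reparametrizations}: for $\varphi\in\mathcal{C}^\infty_c(0,T)$ with $\|\varphi'\|_\infty$ small, set $s\mapsto s+\lambda\varphi(s)$ and let $x_\lambda(t)=x(\sigma_\lambda(t))$ where $\sigma_\lambda$ is the inverse diffeomorphism. Each $x_\lambda$ still lies in $\mathcal{X}_c$ (the collision set is merely relabelled, the winding/collision structure is preserved), and $A_T(x_\lambda)$ is finite. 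Since $\lambda=0$ is a minimum, $\frac{d}{d\lambda}\big|_{\lambda=0}A_T(x_\lambda)=0$. A direct computation of this derivative — this is where the $\mathcal{C}^1$ hypothesis on $U$ and the ``computation in [Chenciner02]'' are used — produces, after integration by parts on each interval $(a_n,b_n)$ away from collisions, the weak identity
\[
\int_0^T h_x(t)\,\varphi'(t)\,dt \;=\; \int_0^T \Big(\partial_t U(t,x(t)) + \text{(pointwise terms)}\Big)\,\varphi(t)\,dt
\]
for all test functions $\varphi$; in other words $h_x$ has distributional derivative equal to an $L^1$ function on the full interval $(0,T)$, with no distributional mass concentrated at the collision instants.

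Concretely I would proceed as follows. \textbf{Step 1.} On each collision-free interval $(a_n,b_n)$, differentiate $h_x$ classically: using the equation $\ddot x=-x/|x|^3+\nabla_x U$, one gets $\dot h_x(t)=\langle \dot x(t),\nabla_x U(t,x(t))\rangle$, which is bounded on compact subsets of $(a_n,b_n)$ but a priori only $L^1$ near the endpoints — this is the quantity that must be controlled globally. \textbf{Step 2.} Carry out the reparametrization variation described above and compute $\frac{d}{d\lambda}\big|_{\lambda=0}A_T(x_\lambda)$. The kinetic term contributes $-\int \frac12|\dot x|^2\varphi'$, the Newtonian term contributes $\int \frac{1}{|x|}\varphi'$, and the potential term contributes $\int U(t,x)\varphi' - \int \partial_t U(t,x)\,\varphi$ (the last piece because the time variable is genuinely shifted). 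Grouping, the vanishing of the derivative becomes $\int_0^T\big(h_x(t)+2U(t,x(t))\big)\varphi'(t)\,dt=\int_0^T\partial_tU(t,x(t))\,\varphi(t)\,dt$, valid for \emph{all} $\varphi\in\mathcal{C}_c^\infty(0,T)$ — crucially including those not vanishing near $E_x$. \textbf{Step 3.} Since $t\mapsto U(t,x(t))$ is continuous (hence in $W^{1,1}$ is not needed — it is bounded and measurable, and its weak derivative along the motion is $\partial_tU+\langle\dot x,\nabla_xU\rangle$), rearrange to read off that the distributional derivative of $h_x$ on $(0,T)$ equals $\langle\dot x,\nabla_xU\rangle\in L^1$, with no singular part. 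Thus $h_x\in W^{1,1}(0,T)$ and admits a continuous extension to $[0,T]$.

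The main obstacle is \textbf{Step 2}: one must justify that $A_T(x_\lambda)$ is differentiable in $\lambda$ at $\lambda=0$ and that the derivative may be computed by differentiating under the integral sign, despite the integrand blowing up like $1/|x|$ near collisions. The point is that $1/|x(t)|\in L^1$ (a consequence of $A_T(x)<+\infty$) and the reparametrization only multiplies it by the smooth bounded factor $\sigma_\lambda'(t)=1/(1+\lambda\varphi'(\sigma_\lambda(t)))$, so dominated convergence applies uniformly for small $\lambda$; similarly $U(t,x(t))$ is bounded. Once this differentiation is legitimate, the rest is the bookkeeping of Step 2 and the elementary measure-theoretic observation of Step 3. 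An alternative, slightly cleaner route avoiding reparametrizations is to test directly with the variation $x+\lambda\varphi\,\dot x$ for $\varphi\in\mathcal{C}_c^\infty(0,T)$ — this stays in $\mathcal{X}_c$ and its first-variation identity is exactly the weak form of the virial/energy relation — but one then has to check that $\varphi\dot x$ is an admissible $H^1$ variation near collisions, which again reduces to the same integrability estimates.
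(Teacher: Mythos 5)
Your proposal follows essentially the same route as the paper: reparametrize time by $t\mapsto t+\lambda\varphi(t)$, use minimality to get $\frac{d}{d\lambda}\big|_{\lambda=0}A_T(x_\lambda)=0$, justify differentiation under the integral via $1/|x|\in L^1$ and the boundedness of the reparametrization factors, and integrate by parts to identify $\dot h_x=\langle\nabla_x U(t,x),\dot x\rangle\in L^1$, whence $h_x\in W^{1,1}(0,T)$. The only blemish is a bookkeeping slip in your grouped identity (the combination should be $h_x-U(t,x)$ rather than $h_x+2U(t,x)$, with the sign of the $\partial_t U$ term adjusted accordingly), but this does not affect the strategy or the correct final conclusion you state.
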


\begin{proof}
	We already noted that $h_x\in L^1(0,T)$; hence, we just need to prove that $h_x$ has a distributional derivative which is a $L^1$-function.
	
	To this end, let us fix an arbitrary $\varphi\in \mathcal{C}^\infty_c((0,T))$ and, for $\lambda \in \RR$, define $\psi_\lambda:[0,T]\to \RR$ by
	\begin{equation} \label{eq-defpsilambda}
	\psi_\lambda (t)=t+\lambda \varphi (t),\quad \forall \ t\in [0,T];
	\end{equation}
	since $\varphi$ has compact support in $(0,T)$, we deduce that $\psi_\lambda (0)=0$ and $\psi_\lambda (T)=T$ and the condition
	\[
	\dot{\psi}_\lambda (t)=1+\lambda \dot{\varphi}(t),\quad \forall \ t\in [0,T],
	\]
	implies that there exists $\lambda_\varphi>0$ such that $\psi_\lambda$ is strictly increasing in $[0,T]$, for every $\lambda \in [-\lambda_\varphi, \lambda_\varphi]$. As a consequence, for every $\lambda \in [-\lambda_\varphi,\lambda_\varphi]$ we can define
	$$
	x_\lambda (t)=x(\psi_\lambda(t))=x(t+\lambda \varphi (t)),\quad \forall \ t\in [0,T];
	$$
	from the previous discussion we also get $x_\lambda\in {\mathcal X}_c$, for every $\lambda \in [-\lambda_\varphi,\lambda_\varphi]$.
	Hence, from the minimality of $x$ we deduce that
	$$
		A_T(x)\leq A_T(x_\lambda),\quad \forall \ \lambda \in [-\lambda_\varphi,\lambda_\varphi],
	$$
	thus implying that
	\begin{equation} \label{eq-zeroelle}
	l'(0)=0,
	\end{equation}
	where 
	$$
	l(\lambda)=A_T(x_\lambda)=\int_0^T \left( \dfrac{|\dot{x}_{\lambda}(t)|^2}{2}+\dfrac{1}{|x_\lambda(t)|}+ U(t,x_\lambda(t))\right)dt,\quad \forall \ \lambda \in [-\lambda_\varphi,\lambda_\varphi].
	$$
	
	Our goal now is to compute $l'(0)$. By means of the change of variable $s=\psi_\lambda(t)$, we plainly obtain
		\begin{equation} \label{eq-elle2}
		\begin{aligned}
		&l(\lambda)=\int_0^T \left(\dfrac{\left|\dot{x}(s)\right|^2}{2}\, \dot{\psi_\lambda}(\psi_\lambda^{-1}(s))^2+\dfrac{1}{|x(s)|}+U(\psi_\lambda^{-1}(s),x(s)) \right)\, \dfrac{ds}{\dot{\psi_\lambda}(\psi_\lambda^{-1}(s))}
		\end{aligned}
		\end{equation}
for every $\lambda \in [-\lambda_\varphi,\lambda_\varphi]$. At this point some work is needed to show that it is possible to 
differentiate under the integral sign. Defining
\begin{equation} \label{eq-defglambda}
g_\lambda(s)=\dot{\psi_\lambda}(\psi_\lambda^{-1}(s))=1+\lambda \dot{\varphi}(\psi_\lambda^{-1}(s)),\quad \forall \ s\in [0,T],\quad \lambda \in [-\lambda_\varphi,\lambda_\varphi],
\end{equation}
we obtain
\begin{equation} \label{eq-dim11}
\dfrac{\partial g_\lambda}{\partial \lambda} (s)=\dot{\varphi}(\psi_\lambda^{-1}(s))+\lambda \ddot{\varphi}(\psi_\lambda^{-1}(s))\dfrac{\partial \psi_\lambda^{-1}}{\partial \lambda}(s),\quad \forall \ s\in [0,T],\quad \lambda \in [-\lambda_\varphi,\lambda_\varphi];
\end{equation}
on the other hand, from the relation
\[
\psi_\lambda^{-1}(s)+\lambda \varphi(\psi_\lambda^{-1}(s))=s,\quad \forall \ s\in [0,T],\quad \lambda \in [-\lambda_\varphi,\lambda_\varphi],
\]
we deduce that 
\[
\dfrac{\partial \psi_\lambda^{-1}}{\partial \lambda}(s)+\varphi((\psi_\lambda^{-1}(s))+\lambda \dot{\varphi}((\psi_\lambda^{-1}(s))\dfrac{\partial \psi_\lambda^{-1}}{\partial \lambda}(s)=0,\quad \forall \ s\in [0,T],\quad \lambda \in [-\lambda_\varphi,\lambda_\varphi],
\]
and
\[
\dfrac{\partial \psi_\lambda^{-1}}{\partial \lambda}(s)=-\dfrac{\varphi((\psi_\lambda^{-1}(s))}{1+\lambda \dot{\varphi}((\psi_\lambda^{-1}(s))},\quad \forall \ s\in [0,T],\quad \lambda \in [-\lambda_\varphi,\lambda_\varphi].
\]
Hence, from \eqref{eq-dim11} we obtain
\[
\dfrac{\partial g_\lambda}{\partial \lambda} (s)=\dot{\varphi}(\psi_\lambda^{-1}(s))-\lambda \dfrac{\ddot{\varphi}(\psi_\lambda^{-1}(s))\, \varphi((\psi_\lambda^{-1}(s))}{1+\lambda \dot{\varphi}((\psi_\lambda^{-1}(s))},\quad \forall \ s\in [0,T],\quad \lambda \in [-\lambda_\varphi,\lambda_\varphi];
\]
therefore, taking again into account \eqref{eq-defglambda}, we deduce that there exist $\lambda'_\varphi\leq \lambda_\varphi$, $M\in (0,1)$ and $M'>0$ such that
\begin{equation} \label{eq-dim12}
|g_\lambda (s)|\geq M,\quad \left|\dfrac{\partial g_\lambda}{\partial \lambda} (s)\right|\leq M',\quad \forall \ s\in [0,T],\quad \lambda \in [-\lambda'_\varphi,\lambda'_\varphi].
\end{equation}
Now, recalling \eqref{eq-defpsilambda}, a simple computation shows that
$$
\begin{aligned}
\dfrac{\partial}{\partial \lambda} & \left(\dfrac{\left|\dot{x}(s)\right|^2}{2}\, \dot{\psi_\lambda}(\psi_\lambda^{-1}(s))+\dfrac{1}{|x(s)|\dot{\psi_\lambda}(\psi_\lambda^{-1}(s))}+ \dfrac{U(\psi_\lambda^{-1}(s),x(s)) }{\dot{\psi_\lambda}(\psi_\lambda^{-1}(s))}\right) \\
&=\dfrac{\left|\dot{x}(s)\right|^2}{2}\, \dfrac{\partial g_\lambda}{\partial \lambda} (s)-\dfrac{1}{|x(s)|}\, \dfrac{\dfrac{\partial g_\lambda}{\partial \lambda}(s)}{g_\lambda(s)^2}
+ \partial_{t} U(\psi_\lambda^{-1}(s),x(s)) \, \dfrac{\partial \psi_\lambda^{-1}}{\partial \lambda}(s)\, \dfrac{1}{g_\lambda(s)} \\
& \hphantom{=}-U(\psi_\lambda^{-1}(s),x(s)) \, \dfrac{\dfrac{\partial g_\lambda}{\partial \lambda}(s)}{g_\lambda(s)^2},
\end{aligned}
$$
for every $s\in [0,T]$ and $\lambda \in [-\lambda'_\varphi,\lambda'_\varphi]$; from \eqref{eq-dim12}, setting
\[
M_1=\max_{t\in [0,T]} |\varphi(t)|,\quad M_2=\max_{t\in [0,T]} |U(t,x(t))|,\quad
M_3=\max_{t\in [0,T]} |\partial_{t}U(t,x(t))|
\] 
we deduce that
\begin{equation} \label{eq-stimaderivata}
\begin{aligned}
\Bigg|\dfrac{\partial}{\partial \lambda} &\left(\dfrac{\left|\dot{x}(s)\right|^2}{2}\, \dot{\psi_\lambda}(\psi_\lambda^{-1}(s))+
\dfrac{1}{|x(s)|\dot{\psi_\lambda}(\psi_\lambda^{-1}(s))}+ \dfrac{U(\psi_\lambda^{-1}(s),x(s))}{\dot{\psi_\lambda}(\psi_\lambda^{-1}(s))}\right)\Bigg| \\
&\leq \dfrac{M'}{M^2} \, \left(\dfrac{\left|\dot{x}(s)\right|^2}{2}+\dfrac{1}{|x(s)|}\right)+\left(\dfrac{M_1 M_3}{M^2}+\dfrac{M' M_2}{M^2}\right),
\end{aligned}
\end{equation}
for every $s\in [0,T]$ and $\lambda \in [-\lambda'_\varphi,\lambda'_\varphi]$.
Observing that the right-hand side in \eqref{eq-stimaderivata} is an integrable function in $[0,T]$, from \eqref{eq-elle2} and \eqref{eq-stimaderivata} we infer that
\[
\begin{aligned}
\displaystyle l'(\lambda) = \int_0^T \Bigg(&\dfrac{\left|\dot{x}(s)\right|^2}{2}\, \dfrac{\partial g_\lambda}{\partial \lambda} (s)-\dfrac{1}{|x(s)|}\, \dfrac{\dfrac{\partial g_\lambda}{\partial \lambda}(s)}{g_\lambda(s)^2}
\\
& + \partial_{t}U(\psi_\lambda^{-1}(s),x(s)) \, \dfrac{\partial \psi_\lambda^{-1}}{\partial \lambda}(s)\, \dfrac{1}{g_\lambda(s)}-U(\psi_\lambda^{-1}(s),x(s)) \, \dfrac{\dfrac{\partial g_\lambda}{\partial \lambda}(s)}{g_\lambda(s)^2} \Bigg)ds,
\end{aligned}
\]
for every $\lambda \in [-\lambda'_\varphi,\lambda'_\varphi]$, and in particular, also integrating by parts, 
\[
\begin{aligned}
l'(0) & = \int_{0}^{T} \left[ \left(\dfrac{\left|\dot{x}(s)\right|^2}{2} -\dfrac{1}{|x(s)|}-U(s,x(s))\right)\dot{\varphi}(s)
- \partial_{t}U(s,x(s))\varphi(s) \right] ds\\
&=\int_0^T \left[ h_x(s)\dot{\varphi}(s)+ \langle \nabla_{x}U(s,x(s)),\dot{x}(s) \rangle \varphi(s) \right] ds.
\end{aligned}
\]
Recalling \eqref{eq-zeroelle}, we conclude that
\[
\int_0^T \left[ h_x(s)\dot{\varphi}(s)+ \langle \nabla_{x}U(s,x(s)),\dot{x}(s) \rangle \varphi(s) \right] ds =0,
\]
for every $\varphi\in \mathcal{C}^\infty_c((0,T))$; this shows 
\[
\dot{h}_{x}(t)=-\langle \nabla_{x}U(t,x(t)),\dot{x}(t) \rangle,
\]
in the distributional sense.
To conclude the proof, it is sufficient to observe that $ U \in \mathcal{C}^{1}$
and $\dot{x}\in L^2(0,T)$.
\end{proof}

From Proposition \ref{prop-energia} we deduce that $h_x$ is bounded in $[0,T]$; using this fact, arguing exactly as in \cite[Lemma 3]{SoaTer13}, from \eqref{eq-LJ} we conclude that collisions are isolated, implying that $E_x$ is a finite set.
Moreover, the continuity of $ h_{x} $ also implies that the second limit in condition (iii) of Definition~\ref{def:bs}
exists and is finite.

\subsection{Asympotic directions near isolated collisions}

In this part, via a blow-up analysis, we study the local behaviour of the ratio $x/|x|$ near a collision of $x$; to this end, we use the classical asymptotic estimates near collisions due to Sperling (see \cite{Spe69}), together with the comparison between action levels of solutions of the unperturbed Kepler problem.

Let $t_0\in (0,T)$ be a collision of $x$; if $t_0=0$ or $t_0=T$, the argument is the same, replacing $[0,T]$ by $[-T/2,T/2]$, by periodicity.

From the previous discussion we know that $t_0$ is isolated; as a consequence, there exists $\bar t>0$ such that
\[
x(t)\neq 0,\quad \forall \ t\in \bar I:=[t_0-\bar t,t_0+\bar t],\ t\neq t_0.
\]
From the classical paper by Sperling \cite{Spe69} it is known that there exist $x_0^\pm\in \RR^2$, with $|x_0^\pm|=1$, such that
\begin{equation} \label{eq-stimesperling11}
\begin{array}{l}
x(t)=\sqrt[3]{\dfrac{9}{2}}\, (t-t_0)^{2/3}\, x_0^+ +R^+(t),\quad \forall \ t\in [t_0,t_0+\bar t],\\
\\
x(t)=\sqrt[3]{\dfrac{9}{2}}\, (t-t_0)^{2/3}\, x_0^-+R^-(t),\quad \forall \ t\in [t_0-\bar t,t_0],
\end{array}
\end{equation}
and
\begin{equation} \label{eq-stimesperling12}
\begin{array}{l}
\displaystyle \dot{x}(t)=\dfrac{2}{3}\, \sqrt[3]{\dfrac{9}{2}}\, (t-t_0)^{-1/3}\, x^+_0+\dot{R}^+(t),\quad \forall \ t\in (t_0,t_0+\bar t],\\
\\
\displaystyle \dot{x}(t)=\dfrac{2}{3}\, \sqrt[3]{\dfrac{9}{2}}\, (t-t_0)^{-1/3}\, x^-_0+\dot{R}^-(t),\quad \forall \ t\in [t_0-\bar t,t_0),
\end{array}
\end{equation}
for some $R^\pm \in \mathcal{C}(\bar I;\RR^2)\cap \mathcal{C}^1([t_0-\bar t,t_0)\cup (t_0,t_0+\bar t];\RR^2)$ such that 
\begin{equation} \label{eq-stimesperling13}
\lim_{t\to t_0^\pm} \dfrac{R^\pm(t)}{(t-t_0)^{2/3}}=0 \qquad \text{and} \qquad
\lim_{t\to t_0^\pm} \dfrac{\dot{R}^\pm(t)}{(t-t_0)^{-1/3}}=0.
\end{equation}
As a consequence, there exists $C_x>0$ such that
\begin{equation} \label{eq-stimesperling2}
\begin{array}{l}
|x(t)|\leq C_x (t-t_0)^{2/3},\quad \forall \ t\in \bar I,\\
\\
\displaystyle |\dot{x}(t)|\leq \dfrac{C_x}{|t-t_0|^{1/3}},\quad \forall \ t\in \bar I,\ t\neq t_0.
\end{array}
\end{equation}
Now, since by Proposition \ref{prop-energia} $h_x$ can be extended to a continuous function in $t_0$, from \eqref{eq-LJ} we deduce that $I$ is strictly convex in a neighborhood of $t_0$; hence, for every sufficiently small $\delta>0$ there exist unique $ 0<t_\delta^\pm<\bar t$ such that
\begin{equation} \label{eq-convessita}
\begin{array}{l}
|x(t_0-t_\delta^-)|=|x(t_0+t_\delta^+)|=\delta,\\
\\
\displaystyle |x(t)|<\delta,\quad \forall \ t\in I_\delta:=(t_0-t_\delta^-,t_0+t_\delta^+).
\end{array}
\end{equation}

We observe that estimates \eqref{eq-stimesperling11} already imply that
\begin{equation}\label{eq-direzionicollisione}
\lim_{ t \to t_{0}^{\pm} } \frac{ x(t) }{ |x(t)| } = x_{0}^{\pm}.
\end{equation}
Therefore, in order to conclude our proof and show that the
minimum $ x $ of the action functional on $ \mathcal{X} $ is a generalized solution in the sense of
Definition~\ref{def:bs}, we just need to show that $ x_{0}^{+} = x_{0}^{-} $ 
This last fact will be obtained by showing that a collision solution with $ x_{0}^{+} \ne x_{0}^{-} $ cannot be a minimizer
of the action in $ \mathcal{X} $: in fact, we will prove that, if $ x_{0}^{+} \ne x_{0}^{-} $, then it is possible to 
modify $ x $ in a neighborhood of the collision time $ t_{0} $ and to obtain a non-collision path with a smaller
action that still belongs to $ \mathcal{X} $.

The first step in this argument is an estimate from below of $ A_{[t_0-t_\delta^-,t_0+t_\delta^+]} (x) $.
The comparison term involves the action relative to colliding parabolic Keplerian orbits.
More precisely, given $x_{0}^\pm \in \RR^2$, with $|x_{0}^\pm|=1$, let us define
\begin{equation} \label{eq-defomotetica}
\zeta_0(t;x_{0}^-,x_{0}^+)=\left\{\begin{array}{ll}
\displaystyle \sqrt[3]{\dfrac{9}{2}}\,t^{2/3} x_{0}^+&\mbox{ if $t\geq 0$,}\\
&\\
\displaystyle \sqrt[3]{\dfrac{9}{2}}\, t^{2/3} x_{0}^-&\mbox{ if $t< 0$.}
\end{array}
\right.
\end{equation}
It is easy to check that $\zeta_0(t;x_{0}^-,x_{0}^+)$ is a parabolic solution of the unperturbed Kepler equation in the
intervals $(-\infty,0)$ and $(0,+\infty)$, having a collision at $t=0$; moreover, taking $s_0 > 0$ such that
\begin{equation}\label{eq-defessezero}
\sqrt[3]{\dfrac{9}{2}\, s_0^{2}} = 1 ,
\end{equation}
it holds that
\[
\left| \zeta_0(\pm s_0;x_{0}^-,x_{0}^+) \right| = 1.
\]
We then define
\begin{equation}\label{eq-azionecollisione}
\varphi_0=\int_{-s_0}^{s_0}\left( \frac{1}{2} \left|\dot{\zeta}_0(t;x_{0}^-,x_{0}^+)\right|^2
+\dfrac{1}{\left|\zeta_0(t;x_{0}^-,x_{0}^+)\right|} \right)\,dt = 4\sqrt[6]{8},
\end{equation}
which actually does not depend on $x_{0}^\pm$ and is the action of $ \zeta_0(\cdot;x_{0}^-,x_{0}^+) $ in $ [-s_{0},s_{0}] $
relatively to Kepler problem without forcing term.
Then, we are able to prove the following estimate.
\begin{lemma} \label{prop-livellocollisione}
Let $t_{0}$ be a collision time for a minimizer $x$ of $A_{T}$ in $\mathcal{X}$ and $ t^{\pm}_{\delta} $,
$ x^{\pm}_{0} $, $ \zeta_{0} $, $ s_{0} $ and $ \varphi_{0} $ be as in \eqref{eq-convessita}--\eqref{eq-azionecollisione}.
If we set $\sigma^\pm_\delta=t_\delta^\pm / \delta^{3/2}$, then we have that
	\begin{align*}
    &\lim_{\delta \to 0^{+} } \sigma_{\delta}^{\pm} = s_{0},\\
	&\liminf_{\delta \to 0^+} \dfrac{A_{[t_0-t_\delta^-,t_0+t_\delta^+]} (x)}{\delta^{1/2}} \geq \varphi_0.
	\end{align*}
\end{lemma}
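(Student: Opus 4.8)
The plan is to establish the two claims separately: the first is a direct reading of Sperling's expansion, while the second rests on a Kepler–invariant blow-up of $x$ around $t_{0}$.

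\smallskip

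\emph{The limits $\sigma_{\delta}^{\pm}\to s_{0}$.} Since $|x_{0}^{\pm}|=1$, the expansions \eqref{eq-stimesperling11} together with \eqref{eq-stimesperling13} give $|x(t_{0}+\tau)|=\sqrt[3]{9/2}\,|\tau|^{2/3}(1+o(1))$ as $\tau\to 0^{\pm}$. Moreover $t_{\delta}^{\pm}\to 0$ as $\delta\to 0^{+}$: otherwise a subsequence of, say, $t_{\delta}^{+}$ would converge to some $t^{*}\in(0,\bar t\,]$ with $|x(t_{0}+t^{*})|=\lim\delta=0$, against the choice of $\bar t$. Evaluating the expansion at $\tau=\pm t_{\delta}^{\pm}$ and recalling $|x(t_{0}\pm t_{\delta}^{\pm})|=\delta$ yields $\delta=\sqrt[3]{9/2}\,(t_{\delta}^{\pm})^{2/3}(1+o(1))$, whence $t_{\delta}^{\pm}=(9/2)^{-1/2}\delta^{3/2}(1+o(1))$ and $\sigma_{\delta}^{\pm}=t_{\delta}^{\pm}/\delta^{3/2}\to(9/2)^{-1/2}=s_{0}$ by \eqref{eq-defessezero}.

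\smallskip

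\emph{Reduction to an unperturbed Keplerian integral.} For small $\delta>0$ introduce the rescaled loop $y_{\delta}:[-\sigma_{\delta}^{-},\sigma_{\delta}^{+}]\to\RR^{2}$, $y_{\delta}(s)=\delta^{-1}x(t_{0}+\delta^{3/2}s)$; this is the scaling under which the Kepler vector field is invariant, and $|y_{\delta}(-\sigma_{\delta}^{-})|=|y_{\delta}(\sigma_{\delta}^{+})|=1$. The (linear) change of variable $t=t_{0}+\delta^{3/2}s$ in $A_{[t_{0}-t_{\delta}^{-},t_{0}+t_{\delta}^{+}]}(x)$, using $|x(t_{0}+\delta^{3/2}s)|=\delta|y_{\delta}(s)|$ and $|\dot x(t_{0}+\delta^{3/2}s)|^{2}=\delta^{-1}|\dot y_{\delta}(s)|^{2}$, gives
\[
\frac{A_{[t_{0}-t_{\delta}^{-},t_{0}+t_{\delta}^{+}]}(x)}{\delta^{1/2}}=\int_{-\sigma_{\delta}^{-}}^{\sigma_{\delta}^{+}}\Big(\tfrac12|\dot y_{\delta}(s)|^{2}+\tfrac1{|y_{\delta}(s)|}\Big)\,ds+\delta\int_{-\sigma_{\delta}^{-}}^{\sigma_{\delta}^{+}}U\big(t_{0}+\delta^{3/2}s,\,x(t_{0}+\delta^{3/2}s)\big)\,ds .
\]
Since $|x|$ is small near $t_{0}$, by \eqref{eq-subquadratic} the last integrand is bounded, while the length $\sigma_{\delta}^{-}+\sigma_{\delta}^{+}$ stays bounded; hence the last term is $O(\delta)$ and is negligible in the limit. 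So it suffices to bound below the $\liminf$ of the first integral.

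\smallskip

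\emph{Passing to the limit.} Substituting $t=t_{0}+\delta^{3/2}s$ into \eqref{eq-stimesperling11}--\eqref{eq-stimesperling12} one checks, using \eqref{eq-stimesperling13}, that $y_{\delta}\to\zeta_{0}(\cdot;x_{0}^{-},x_{0}^{+})$ and $\dot y_{\delta}\to\dot\zeta_{0}(\cdot;x_{0}^{-},x_{0}^{+})$ \emph{uniformly} on every compact subset of $(0,+\infty)$ and of $(-\infty,0)$: the leading terms scale exactly onto $\zeta_{0},\dot\zeta_{0}$, and the remainders are $o(s^{2/3})$ and $o(s^{-1/3})$, uniformly for $|s|\ge\eta$. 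In particular $|y_{\delta}|$ is bounded away from $0$ on $\{\eta\le|s|\le s_{0}+1\}$ for $\delta$ small, so $\tfrac12|\dot y_{\delta}|^{2}+\tfrac1{|y_{\delta}|}$ converges there uniformly to $\tfrac12|\dot\zeta_{0}|^{2}+\tfrac1{|\zeta_{0}|}$. Fixing $\eta\in(0,s_{0})$, the uniform convergence together with $\sigma_{\delta}^{\pm}\to s_{0}$ gives $\int_{\eta}^{\sigma_{\delta}^{+}}(\tfrac12|\dot y_{\delta}|^{2}+\tfrac1{|y_{\delta}|})\,ds\to\int_{\eta}^{s_{0}}(\tfrac12|\dot\zeta_{0}|^{2}+\tfrac1{|\zeta_{0}|})\,ds$, and likewise on $[-\sigma_{\delta}^{-},-\eta]$; discarding the nonnegative contribution of $(-\eta,\eta)$,
\[
\liminf_{\delta\to 0^{+}}\int_{-\sigma_{\delta}^{-}}^{\sigma_{\delta}^{+}}\Big(\tfrac12|\dot y_{\delta}|^{2}+\tfrac1{|y_{\delta}|}\Big)ds\ \ge\ \int_{-s_{0}}^{-\eta}\Big(\tfrac12|\dot\zeta_{0}|^{2}+\tfrac1{|\zeta_{0}|}\Big)ds+\int_{\eta}^{s_{0}}\Big(\tfrac12|\dot\zeta_{0}|^{2}+\tfrac1{|\zeta_{0}|}\Big)ds .
\]
Letting $\eta\to 0^{+}$, monotone convergence (the integrand behaves like $|s|^{-2/3}\in L^{1}(-s_{0},s_{0})$) turns the right-hand side into $\varphi_{0}$, see \eqref{eq-azionecollisione}; combined with the identity of the previous paragraph this gives $\liminf_{\delta\to0^{+}}\delta^{-1/2}A_{[t_{0}-t_{\delta}^{-},t_{0}+t_{\delta}^{+}]}(x)\ge\varphi_{0}$.

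\smallskip

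The delicate point is this last step: a priori the rescaled Dirichlet integral could concentrate at $s=0$ and ``leak'' energy near the collision. Nonnegativity of the integrand makes the truncation to $|s|\ge\eta$ legitimate, and finiteness of the action of the limiting parabolic arc $\zeta_{0}$ guarantees that nothing is lost as $\eta\to0$; but the argument genuinely needs the \emph{uniform} $\mathcal{C}^{1}$ convergence of $y_{\delta}$ to $\zeta_{0}$ away from the collision, hence Sperling's sharp remainder estimates \eqref{eq-stimesperling13}, not merely the leading-order asymptotics.
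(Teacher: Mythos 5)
Your proof is correct and follows essentially the same route as the paper's: the Kepler-invariant blow-up $y_\delta(s)=\delta^{-1}x(t_0+\delta^{3/2}s)$, identification of its limit with the parabolic collision--ejection solution $\zeta_0$ via Sperling's asymptotics, and a lower-semicontinuity argument for the rescaled action, with the perturbation term vanishing like $\delta$. The only (harmless) differences are that you obtain $\sigma_\delta^{\pm}\to s_0$ directly from Sperling's expansion of $|x|$ rather than from the normalizations $|z_\delta|\le 1$, $|z_\delta(\pm\sigma_\delta^{\pm})|=1$ of the blow-up limit as the paper does, and you replace the paper's application of Fatou's lemma by truncation away from $s=0$ combined with locally uniform convergence and monotone convergence as $\eta\to 0^{+}$.
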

\begin{proof}
We employ a blow-up argument: for every $\delta>0$ we define
	\begin{equation} \label{eq-defzetadelta}
	z_\delta(t)=\dfrac{1}{\delta} x\left(\delta^{3/2}t+t_0\right),
	\end{equation}
	for every $t\in J_\delta :=[-\sigma^-_\delta,\sigma_\delta^+]$.
From the relations in \eqref{eq-convessita}, recalling also that $x$ has a collision in $t_0$, we deduce that the function $z_\delta$ satisfies
\begin{equation} \label{eq-proprzetadelta}
\begin{array}{l}
|z_\delta (-\sigma^-_\delta)|=|z_\delta (\sigma^+_\delta)|=1,\quad z_\delta(0)=0,\\
\\
\displaystyle |z_\delta(t)|\leq 1,\quad \forall \ t\in J_\delta;
\end{array}
\end{equation}
moreover, conditions \eqref{eq-stimesperling2} imply that
$$
\begin{array}{l}
|z_\delta(t)|\leq C_x t^{2/3},\quad \forall \ t\in J_\delta,\\
\\
\displaystyle |\dot{z}_\delta(t)|\leq \dfrac{C_x}{|t|^{1/3}},\quad \forall \ t\in J_\delta,\ t\neq 0,
\end{array}
$$
being
\begin{equation} \label{eq-derzetadelta}
\begin{array}{l}
\dot{z}_\delta (t)=\delta^{1/2} \, \dot{x}(\delta^{3/2}t+t_0),\quad \forall \ t\in J_\delta,\ t\neq 0.
\end{array}
\end{equation}
Let us also observe that from \eqref{eq-defzetadelta} and the first relation in \eqref{eq-derzetadelta} we deduce 
\begin{equation} \label{eq-azionezetadelta1}
\begin{aligned}
&\dfrac{A_{[t_0-t_\delta^-,t_0+t_\delta^+]} (x)}{\delta^{1/2}} =
\dfrac{1}{\delta^{1/2}} \int_{t_0-t^-_\delta}^{t_0+t^+_\delta} \left( \dfrac{|\dot{x}(t)|^2}{2} + \dfrac{1}{|x(t)|} +
U(t,x(t)) \right) dt \\
\\
&\hspace{40pt}=
\int_{-\sigma^-_\delta}^{\sigma^+_\delta} \left( \dfrac{|\dot{z}_\delta(t)|^2}{2} +\dfrac{1}{|z_\delta(t)|} \right) dt
+\delta^{2} \int_{-\sigma^-_\delta}^{\sigma^+_\delta} U\left( \delta^{3/2}t+t_0, z_\delta(t) \right) dt.
\end{aligned}
\end{equation}
Now, let us study the convergence of the sequence $z_\delta$ when $\delta \to 0^+$.
Since
\[
z_\delta(\sigma^\pm)=\int_0^{\sigma^\pm_\delta} \dot{z}_\delta(t)\, dt,
\]
from  \eqref{eq-stimesperling2} and \eqref{eq-proprzetadelta} we deduce that
\[
1\leq \int_{0}^{\sigma^\pm_\delta} \dfrac{C_x}{t^{1/3}}\,dt=\dfrac{3}{2}\, C_x \, (\sigma^\pm_\delta)^{2/3},
\]
which shows that $\sigma^\pm_\delta$ are bounded away from zero, thus implying
\begin{equation}\label{defliminf}
\liminf_{\delta \to 0^+} \sigma^\pm_\delta =: \sigma^{\pm} > 0.
\end{equation}
Now, from \eqref{eq-stimesperling11} and \eqref{eq-stimesperling12} we infer that 
$$
\begin{array}{l}
z_\delta(t)=\sqrt[3]{\dfrac{9}{2}}\, t^{2/3}\, x_0^+ +\dfrac{1}{\delta}\, R^+(\delta^{3/2}t+t_0),\quad \forall \ t\in [0,\sigma^+_\delta],\\
\\
z_\delta(t)=\sqrt[3]{\dfrac{9}{2}}\, t^{2/3}\, x_0^-+\dfrac{1}{\delta}\, R^-(\delta^{3/2}t+t_0),\quad \forall \ t\in [\sigma^-_\delta,0],
\end{array}
$$
and
$$
\begin{array}{l}
\displaystyle \dot{z}_\delta(t)=\dfrac{2}{3}\, \sqrt[3]{\dfrac{9}{2}}\, t^{-1/3}\, x^+_0+\delta^{1/2} \dot{R}^+(\delta^{3/2}t+t_0),\quad \forall \ t\in (0,\sigma^+_\delta],\\
\\
\displaystyle \dot{z}_\delta(t)=\dfrac{2}{3}\, \sqrt[3]{\dfrac{9}{2}}\, t^{-1/3}\, x^-_0+\delta^{1/2} \dot{R}^-(\delta^{3/2}t+t_0),\quad \forall \ t\in [\sigma^-_\delta,0),
\end{array}
$$
where, taking into account \eqref{eq-stimesperling13}, 
$$
\begin{array}{l}
\displaystyle \lim_{\delta\to 0^+} \dfrac{1}{\delta}\, R^\pm (\delta^{3/2}t+t_0)=\lim_{s\to t_0} t^{2/3} \, \dfrac{R^\pm(s)}{(s-t_0)^{2/3}}=0,\\
\\
\displaystyle \lim_{\delta\to 0^+} \delta^{1/2} \dot{R}^\pm (\delta^{3/2}t+t_0)=\lim_{s\to t_0} \dfrac{1}{t^{1/3}} \, \dfrac{R^\pm(s)}{(s-t_0)^{-1/3}}=0,
\end{array}
$$
for every $t\in (-\sigma^-,\sigma^+)$, $t\neq 0$.
Therefore, we obtain 
\[
\begin{array}{l}
\displaystyle \lim_{\delta \to 0^+} z_\delta(t)=\sqrt[3]{\dfrac{9}{2}}\, t^{2/3}\, x_0^+,\quad \forall \ t\in [0,\sigma^+),\\
\\
\displaystyle \lim_{\delta \to 0^+} z_\delta(t)=\sqrt[3]{\dfrac{9}{2}}\, t^{2/3}\, x_0^-,\quad \forall \ t\in (-\sigma^-, 0],
\end{array}
\]
and
\[
\begin{array}{l}
\displaystyle \lim_{\delta \to 0^+} \dot{z}_\delta(t)=\dfrac{2}{3}\, \sqrt[3]{\dfrac{9}{2}}\, t^{-1/3}\, x^+_0,\quad \forall \ t\in (0,\sigma^+),\\
\\
\displaystyle \lim_{\delta \to 0^+} \dot{z}_\delta(t)=\dfrac{2}{3}\, \sqrt[3]{\dfrac{9}{2}}\, t^{-1/3}\, x^-_0,\quad \forall \ t\in (-\sigma^-, 0).
\end{array}
\]
In particular, $z_\delta(t)$ converges pointwise to $ \zeta_{0}(t;x_{0}^{-},x_{0}^{+}) $ for all $ t\in (-\sigma^-, \sigma^+) $;
recalling \eqref{eq-defessezero} and \eqref{eq-proprzetadelta}, we then deduce that the inferior limit in \eqref{defliminf} is actually a limit and
\[
\sigma^+=\sigma^-=s_0.
\]
Using again the pointwise convergence of $z_\delta(t)$ and $\dot z_{\delta}(t)$, we can use Fatou's lemma in \eqref{eq-azionezetadelta1}, thus obtaining
\[
\liminf_{\delta \to 0^+} \dfrac{A_{[t_0-t_\delta^-,t_0+t_\delta^+]} (x)}{\delta^{1/2}} \geq \varphi_0.
\qedhere
\]
\end{proof}

The previous blow-up argument shows that a suitable rescaling of a minimizer around a collision time converges to the
parabolic collision solution of Kepler's problem $\zeta_{0}(\cdot;x_{0}^{-},x_{0}^{+})$, given by \eqref{eq-defomotetica}, that joins two points $x_{0}^{-}$ in a prescribed time interval of length $2 s_0$.
However, if $ x_{0}^{+} \ne x_{0}^{-} $, there exist two collision-free Keplerian orbits,
$ \xi_{0}^{1} $ and $ \xi_{0}^{2} $, that joins the same two
points in the same time interval but with a smaller action than $ \varphi_{0} $.
In the next result, we collect this fact in the next result together with other useful and known properties.
\begin{lemma} \label{prop-archi} \cite[Prop. 5.7]{FuGrNe}
For any $x_{0}^{\pm} \in \RR^2$, with $|x_{0}^{\pm}|=1$ and $x_{0}^{+}\neq x_{0}^{-}$, there are
two solutions $\xi^i=\xi^i(\cdot; x_{0}^{-},x_{0}^{+}):[-s_0,s_0] \to \mathbb{R}^2 \setminus \{(0,0)\}$, $i=1,2$,
of the problem
\begin{equation} \label{eq-keplero}
\begin{cases}
\ddot{\xi} = -\dfrac{\xi}{|\xi|^3} & \text{in } [-s_{0},s_{0}]\\
\xi(\pm s_{0})=x_{0}^{\pm}
\end{cases}
\end{equation}
such that
\begin{itemize}
\item[i)]
they parametrize two simple curves which are not homotopic to each other in $ \mathbb{R}^{2} \setminus \{(0,0)\} $
with fixed endpoints;
\item[ii)]
their actions satisfy
\[
\varphi_0^i (x_{0}^{-},x_{0}^{+}) :=
\int_{-s_0}^{s_0} \left( \frac{|\dot{\xi}^i(t)|^{2}}{2}+\dfrac{1}{|\xi^i(t)|} \right)\,dt < \varphi_{0},\qquad i=1, 2;
\]
\item[iii)]
up to a suitable choice of the label $i\in\{1,2\}$, they depend smoothly on $ x_{0}^{\pm} $;
namely, if $ x_{n}^{\pm} \to x_{0}^{\pm} $
as $ n \to +\infty $, with $|x_{0}|^{\pm} = 1 $ and $ x_{0}^{+} \ne x_{0}^{-} $, then
$ \xi^{i}(\cdot;x_{n}^{-},x_{n}^{+}) \to \xi^i(\cdot; x_{0}^{-},x_{0}^{+}) $ in $ \mathcal{C}^{2}([-s_{0},s_{0}];\mathbb{R}^{2})$.
\end{itemize}
\end{lemma}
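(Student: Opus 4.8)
Here is how I would approach Lemma~\ref{prop-archi}.

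\medskip

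This is a statement about the planar two-point, fixed-time two-body problem (Lambert's problem) together with a comparison of the associated action level with that of the colliding parabola $\zeta_0$, and it is classical (see \cite[Prop. 5.7]{FuGrNe} and the references therein). Let $\psi\in(0,\pi]$ be the angular separation of $x_0^-$ and $x_0^+$. The only two homotopy classes (with fixed endpoints, in $\RR^2\setminus\{(0,0)\}$) of paths $\xi\colon[-s_0,s_0]\to\RR^2\setminus\{(0,0)\}$ with $\xi(\pm s_0)=x_0^\pm$ that can contain a simple arc of a conic are the one, say $i=1$, of total angular variation $\psi$ and the one, say $i=2$, of total angular variation $\psi-2\pi$ (a conic arc is simple only when it sweeps a true-anomaly interval of length $<2\pi$). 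For each $i$ I would minimise the Keplerian action
\[
\mathcal A(\xi)=\int_{-s_0}^{s_0}\Big(\tfrac12\,|\dot\xi(t)|^2+\tfrac1{|\xi(t)|}\Big)\,dt
\]
over the set $\mathcal Y_i$ of $H^1$-paths with $\xi(\pm s_0)=x_0^\pm$ that either lie in class $i$ or vanish somewhere in $[-s_0,s_0]$. As in Section~\ref{sec2}, $\mathcal Y_i$ is sequentially weakly closed, and on it $\mathcal A$ is coercive (by the fixed endpoint conditions and the kinetic term) and sequentially weakly lower semicontinuous (convexity of the kinetic term, Fatou's lemma for the singular potential); hence a minimiser $\xi^i$ exists.

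The heart of the matter is to show that $\xi^i$ does not collide and that $\mathcal A(\xi^i)<\varphi_0$, which yields (ii). The quantitative input is a Sundman--Gordon type bound: every $\xi\in H^1([-s_0,s_0];\RR^2)$ with $\xi(\pm s_0)=x_0^\pm$ that vanishes somewhere satisfies $\mathcal A(\xi)\ge\varphi_0$, with equality only for $\zeta_0(\cdot;x_0^-,x_0^+)$. Indeed, if $\xi(\tau)=0$ then on each of $[-s_0,\tau]$ and $[\tau,s_0]$ the function $r=|\xi|$ is absolutely continuous and joins $1$ to $0$, so by the elementary bound $\tfrac12\dot r^2+\tfrac1r\ge\sqrt2\,|\dot r|\,r^{-1/2}$ and the fundamental theorem of calculus,
\[
\int\Big(\tfrac12\,|\dot\xi|^2+\tfrac1{|\xi|}\Big)\,dt\ \ge\ \int\Big(\tfrac12\,\dot r^2+\tfrac1r\Big)\,dt\ \ge\ \sqrt2\,\Big|\!\int\! \dot r\, r^{-1/2}\,dt\Big|\ =\ 2\sqrt2 ,
\]
and adding the two contributions gives $\mathcal A(\xi)\ge 4\sqrt2=\varphi_0$, with equality only if the collision is unique, occurs at $\tau=0$, and both halves are parabolic radial ejections, i.e.\ only for $\zeta_0$. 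On the other hand, in each class $i$ there is a collision-free competitor with action strictly below $\varphi_0$: in the easy regime (e.g.\ $i=1$ with $\psi$ not too large) a suitably parametrised arc of a circle already works, while in general --- and throughout class $2$ --- one estimates the action of the Keplerian conic arc itself, conveniently through its closed form $\mathcal A_i=2s_0E_i+2\sqrt{|a_i|}\,|\Delta\mathcal E_i|$ in terms of the energy $E_i$, the semi-major axis $a_i$ and the swept eccentric (or hyperbolic) anomaly $\Delta\mathcal E_i$. Combining the two facts, $\min_{\mathcal Y_i}\mathcal A<\varphi_0$, so $\xi^i$ cannot vanish; being an interior, unconstrained minimiser, $\xi^i$ then solves $\ddot\xi=-\xi/|\xi|^3$ on $[-s_0,s_0]$ with $\xi^i(\pm s_0)=x_0^\pm$, and $\varphi_0^i(x_0^-,x_0^+)=\mathcal A(\xi^i)<\varphi_0$.

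For (i): being collision-free, $\xi^i$ sweeps a true-anomaly interval of length $\psi<2\pi$ (if $i=1$) or $2\pi-\psi<2\pi$ (if $i=2$), hence parametrises a simple curve; the two curves have total angular variations differing by $2\pi$, so they are not homotopic with fixed endpoints in $\RR^2\setminus\{(0,0)\}$. Uniqueness of $\xi^i$ within its class --- so that it is exactly the Lambert conic with the prescribed transfer angle --- follows from the classical monotonicity of the transfer time along the one-parameter family of conics through $x_0^-,x_0^+$ with fixed transfer angle. For (iii): writing Lambert's time equation as $\Phi(a;x_0^-,x_0^+)=2s_0$ with $\Phi$ smooth and $\partial_a\Phi\ne0$ (again by that monotonicity), the implicit function theorem gives the smooth dependence of $\xi^i$ on $x_0^\pm$; since this is only local, the labelling of the two arcs may have to be chosen suitably --- their roles can be interchanged along a loop of endpoint configurations passing through the antipodal one.

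The main obstacle is the strict inequality in (ii). Crude comparison paths (chords, circular or keyhole paths) have action exceeding $\varphi_0$ precisely when the configuration degenerates --- for $i=2$ as $\psi\to0^+$, when the class-$2$ arc collapses onto $\zeta_0$ and $\varphi_0-\varphi_0^i\to0^+$ --- so one cannot avoid a genuine estimate of the action of the conic arc, or, equivalently, a perturbation computation near $\zeta_0$ showing that the colliding parabola is a strict saddle, not a minimiser, of $\mathcal A$ in each of the two homotopy classes. This step is conveniently streamlined by the Levi-Civita change of variables, which turns $\mathcal A$ into a Dirichlet-type energy (under a time-reparametrisation constraint) in the regularising plane, making the superiority of the smooth oscillator arc over the cornered collision path more transparent.
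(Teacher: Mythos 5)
You should first be aware that the paper does not prove this lemma: it quotes it from the literature, attributing existence together with statement i) to Albouy's lectures, the action estimate ii) to \cite[Prop.~5.7]{FuGrNe}, and sketching iii) via continuous dependence on initial data combined with Lambert's theorem (to get continuity of the energy, hence of $|\dot\xi(-s_0)|$, with \cite[Appendix~2]{FuGrNe} for the direction of $\dot\xi(-s_0)$). Your attempt at a self-contained variational proof is therefore a genuinely different route. Much of its architecture is sound: the Sundman--Gordon computation giving $\mathcal A(\xi)\ge 2\sqrt2+2\sqrt2=4\sqrt2=\varphi_0$ for any fixed-endpoint path through the origin is correct (note $4\sqrt[6]{8}=4\sqrt2$), the equality analysis forcing the extremal to be $\zeta_0$ is right (the time constraint $|[-s_0,\tau]|=|[\tau,s_0]|=s_0$ pins $\tau=0$), the choice of the two admissible homotopy classes is the correct one, and the deduction ``minimum $<\varphi_0$ $\Rightarrow$ the minimizer is collision-free, hence a classical Keplerian arc in its class'' is the standard and correct conclusion. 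Your treatment of iii) is essentially the paper's own sketch.

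The genuine gap is exactly where you say it is, and flagging an obstacle is not the same as overcoming it: you never exhibit, for an arbitrary configuration $x_0^\pm$ with $x_0^+\ne x_0^-$, a collision-free competitor in \emph{each} of the two classes with action strictly below $\varphi_0$. This strict inequality is the entire content of statement ii) and the only quantitative input that Proposition~\ref{pro-rimbalzo} uses to exclude collisions; without it your minimizers could coincide with $\zeta_0$ itself (which lies in the weak closure of both classes), and nothing in i)--iii) survives. The two strategies you gesture at are not interchangeable with a proof: the circular-arc test path works only for a restricted range of transfer angles, and the closed-form $\mathcal A_i=2s_0E_i+2\sqrt{|a_i|}\,|\Delta\mathcal E_i|$ for the conic arc presupposes the existence and identification of the Lambert arc in the given class (itself unproved in your text, resting on an asserted monotonicity of the transfer time) and still requires a nontrivial computation to compare with $4\sqrt2$ --- a comparison that degenerates as the transfer angle tends to $0$ for the indirect arc, so no soft argument will do. To close the gap you must either carry out that computation (this is precisely what \cite[Prop.~5.7]{FuGrNe} does), or implement your alternative suggestion of a second-order (Levi-Civita/Jacobi) analysis showing $\zeta_0$ is not a local minimizer in either class; as it stands, the proposal reduces the lemma to the one estimate it was supposed to establish.
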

The above solutions $ \xi^{1} $ and $ \xi^{2} $  are usually called the \emph{direct} and \emph{indirect} Keplerian arcs
and the proof of their existence is typically attributed to Marchal (see \cite[Section 5.2]{Chenciner05}).
Nowadays, various proofs are available, at different level of generality (see \cite{Che,FuGrNe,SoaTer13,TeVe,Yu16}).
In particular, their existence with the first statement is proved in \cite{Alb2002}, while the estimate
in the second statement is considered in \cite[Proposition~5.7]{FuGrNe}.
The third statement follows from the theorem of continuous dependence on initial data
as soon as one realizes that the initial speed $\dot{\xi}(-s_{0})$ of the solutions of \eqref{eq-keplero} depend smoothly
on $ x_{0}^{\pm} $.
It is possible to use Lambert's Theorem (see \cite[Lecture 5]{Alb2002})
to find an explicit formula that links the energy $ H_{i} $ of $ \xi^{i} $ and $ | x_{0}^{+} - x_{0}^{-} | $.
Indeed, Lambert's Theorem states that the quantities $ \ell := |\xi(-s_{0})| + |\xi(s_{0})| = 2 $,
$ \Delta t := 2s_{0} = 2 \sqrt{2} / 3 $, $ c:= | \xi(s_{0}) - \xi(-s_{0}) | $ and $ H := |\dot{\xi}|^{2} - 1/|\xi| $
are functionally dependent for the solutions of \eqref{eq-keplero} and their functional relation is the same
for all configurations of $ \xi(\pm s_{0}) $ as long as $ \ell $ and $ c $ are kept constant.
Therefore one obtains easily that the energy $H$ and, hence, the modulus of the initial speed $ |\dot{\xi}(-s_{0})| $
depend continuously on $ x_{0}^{\pm} $.
As for the continuity of the initial speed versor $ \dot{\xi}(-s_{0})/|\dot{\xi}(-s_{0})| $ one can use the arguments
and the parametrization of the orbit of the solutions of \eqref{eq-keplero} given in \cite[Appendix 2]{FuGrNe}.

We can now conclude our argument by showing that the asymptotic directions at a collision for a minimum of
the action cannot be different.
\begin{proposition}\label{pro-rimbalzo}
Let $t_{0}$ be a collision time for a minimizer $x$ of $A_{T}$ in $\mathcal{X}$. Then $ x_{0}^{+} = x_{0}^{-} $.
\end{proposition}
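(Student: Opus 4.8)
The plan is to argue by contradiction: assume $x_{0}^{+}\neq x_{0}^{-}$ and show that, for $\delta>0$ small, one can modify the minimizer $x$ only on the interval $I_{\delta}=(t_{0}-t_{\delta}^{-},t_{0}+t_{\delta}^{+})$, replacing the colliding parabolic arc by a rescaled, reparametrized copy of one of the two collision-free Keplerian arcs $\xi^{1},\xi^{2}$ of Lemma~\ref{prop-archi}, so as to obtain a competitor $\tilde{x}_{\delta}\in\mathcal{X}$ with $A_{T}(\tilde{x}_{\delta})<A_{T}(x)$, against the minimality of $x$. To build the competitor, set $u_{\delta}^{\pm}:=x(t_{0}\pm t_{\delta}^{\pm})/\delta$; since $t_{\delta}^{\pm}\to 0^{+}$ as $\delta\to 0^{+}$, the Sperling expansions \eqref{eq-stimesperling11} give $|u_{\delta}^{\pm}|=1$ and $u_{\delta}^{\pm}\to x_{0}^{\pm}$, hence for $\delta$ small $u_{\delta}^{-}\neq u_{\delta}^{+}$ and the arcs $\xi^{i}(\cdot\,;u_{\delta}^{-},u_{\delta}^{+})$, $i=1,2$, are well defined. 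Exploiting the self-similarity of the Kepler equation (if $\xi$ is a solution, so is $t\mapsto\lambda^{2/3}\xi(t/\lambda)$), the spatial dilation by the factor $\delta$ turns $\xi^{i}(\cdot\,;u_{\delta}^{-},u_{\delta}^{+})$ into a collision-free Keplerian solution on an interval of length $2\delta^{3/2}s_{0}$ joining $x(t_{0}-t_{\delta}^{-})$ to $x(t_{0}+t_{\delta}^{+})$; composing it with the increasing, piecewise-affine bijection of $\overline{I_{\delta}}$ onto that interval which fixes $t_{0}$ (its two slopes equal $s_{0}/\sigma_{\delta}^{\pm}\to 1$ by Lemma~\ref{prop-livellocollisione}), one gets a continuous, collision-free arc on $\overline{I_{\delta}}$ with the same endpoint values as $x$. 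Let $\tilde{x}_{\delta}$ coincide with this arc on $\overline{I_{\delta}}$ and with $x$ on $[0,T]\setminus I_{\delta}$; then $\tilde{x}_{\delta}\in H^{1}_{T}$ is continuous and $T$-periodic (as $I_{\delta}$ shrinks to $\{t_{0}\}$; if $t_{0}\in\{0,T\}$ one argues as above, on $[-T/2,T/2]$).

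\noindent\textbf{Membership in $\mathcal{X}$.} If $x$ has a collision instant besides $t_{0}$, then for $\delta$ small $\tilde{x}_{\delta}$ keeps it and $\tilde{x}_{\delta}\in\mathcal{X}_{c}$. If instead $t_{0}$ is the only collision of $x$, then $\tilde{x}_{\delta}$ is collision-free and the label $i$ must be chosen so that $r_{\tilde{x}_{\delta}}\neq 0$: closing the fixed path $x|_{[0,T]\setminus I_{\delta}}$ with either arc produces a loop in $\RR^{2}\setminus\{0\}$, and the two winding numbers so obtained differ by a nonzero integer, because $\xi^{1}$ and $\xi^{2}$ are not homotopic with fixed endpoints in $\RR^{2}\setminus\{0\}$ (item i) of Lemma~\ref{prop-archi}); hence at least one of them is nonzero, and choosing $i$ accordingly gives $\tilde{x}_{\delta}\in\mathcal{X}_{r}$. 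In both cases $\tilde{x}_{\delta}\in\mathcal{X}$ (the choice of $i$ being free in the first case).

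\noindent\textbf{Action estimate and conclusion.} Since $\tilde{x}_{\delta}=x$ off $I_{\delta}$, minimality of $x$ gives $A_{I_{\delta}}(x)\leq A_{I_{\delta}}(\tilde{x}_{\delta})$, where $A_{I_{\delta}}:=A_{[t_{0}-t_{\delta}^{-},t_{0}+t_{\delta}^{+}]}$. Undoing first the affine reparametrization (whose Jacobian factors $s_{0}/\sigma_{\delta}^{\pm}$ tend to $1$) and then the $\delta$-dilation, and estimating the contribution of $U$ by $O(\delta^{3/2})$ via \eqref{eq-subquadratic} on the small interval $I_{\delta}$, the Keplerian part of $A_{I_{\delta}}(\tilde{x}_{\delta})$ equals $\delta^{1/2}\varphi_{0}^{i}(u_{\delta}^{-},u_{\delta}^{+})$, which tends to $\delta^{1/2}\varphi_{0}^{i}(x_{0}^{-},x_{0}^{+})(1+o(1))$ by the $\mathcal{C}^{2}$-continuous dependence in item iii) of Lemma~\ref{prop-archi}; thus
\[
A_{I_{\delta}}(\tilde{x}_{\delta})=\delta^{1/2}\bigl(\varphi_{0}^{i}(x_{0}^{-},x_{0}^{+})+o(1)\bigr),\qquad\delta\to 0^{+}.
\]
On the other hand, Lemma~\ref{prop-livellocollisione} yields $\liminf_{\delta\to 0^{+}}A_{I_{\delta}}(x)/\delta^{1/2}\geq\varphi_{0}$, while $\varphi_{0}^{i}(x_{0}^{-},x_{0}^{+})<\varphi_{0}$ by item ii) of the same lemma. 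Hence, for $\delta$ small, $A_{I_{\delta}}(x)>A_{I_{\delta}}(\tilde{x}_{\delta})$, contradicting minimality; therefore $x_{0}^{+}=x_{0}^{-}$.

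\noindent\textbf{Main obstacle.} The delicate point is the action estimate for the competitor: the rescaling and time-reparametrization must be performed carefully enough to guarantee that the Keplerian part of the action is exactly $\varphi_{0}^{i}\delta^{1/2}+o(\delta^{1/2})$ (this is where both the $\mathcal{C}^{2}$-continuous dependence of the arcs on their endpoints and the subquadratic growth of $U$ enter), together with the topological bookkeeping ensuring that, when $t_{0}$ is the only collision, one of the two arc replacements still has nonzero winding number and so remains in $\mathcal{X}$.
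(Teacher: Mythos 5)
Your proof is correct and follows essentially the same route as the paper's: surgery near $t_{0}$ with the direct/indirect Keplerian arcs of Lemma~\ref{prop-archi}, the non-homotopy of the two arcs to guarantee that at least one competitor stays in $\mathcal{X}$, and the comparison of $A_{[t_0-t_\delta^-,t_0+t_\delta^+]}/\delta^{1/2}$ via Lemma~\ref{prop-livellocollisione} together with items ii) and iii) of Lemma~\ref{prop-archi}. Your explicit piecewise-affine reparametrization with slopes $s_{0}/\sigma_{\delta}^{\pm}\to 1$ is in fact a welcome refinement, since the paper's formula for $y_{\delta}^{i}$ evaluates $\xi^{i}$ on $[-\sigma_{\delta}^{-},\sigma_{\delta}^{+}]$ rather than on $[-s_{0},s_{0}]$ and, taken literally, does not exactly match the endpoint values of $x$ at the gluing times.
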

\begin{proof}
Let $ q_{\delta}^{\pm} = x( t_{0} \pm t_{\delta}) / |x( t_{0} \pm t_{\delta})| $.
If, by contradiction, we suppose that $ x_{0}^{+} \ne x_{0}^{-} $, then we have
$ q_{\delta}^{+} \ne q_{\delta}^{-} $ for all $ \delta > 0 $ sufficiently small.
Thus, we can apply Lemma~\ref{prop-archi} and use the Keplerian arcs $ \xi^{i}(\cdot,q_{\delta}^{-},q_{\delta}^{+}) $
to modify $ x $ in a neighborhood of $ t_{0} $  and to obtain two different paths in the following way:
\[
y_{\delta}^{i}(t) =
\begin{cases}
x(t) & \text{if } t \in \left[0,t_{0}-t_{\delta}^{-}\right)\cup\left(t_{0}+t_{\delta}^{+},T\right], \\
\delta \xi^{i}\left( \dfrac{t-t_{0}}{\delta^{3/2}}; q_{\delta}^{-},q_{\delta}^{+} \right)
& \text{if } t \in [ t_{0}-t_{\delta}^{-}, t_{0}+t_{\delta}^{+} ],
\end{cases}
\text{ for } i=1,2.
\]
Thanks to Statement i) in Lemma~\ref{prop-archi}, for each $ \delta >0$ at least one between $ y^{1}_{\delta} $ and
$ y^{2}_{\delta} $ belongs to $ \mathcal{X} $.
Straightforward computations show that:
\[
\begin{aligned}
\dfrac{ A_{[t_{0}-t_{\delta}^{-},t_{0}+t_{\delta}^{+}]}(y_{\delta}^{i}) }{\delta^{1/2}} = 
& \int_{-\sigma^-_\delta}^{\sigma^+_\delta} \left( \dfrac{|\dot{\xi}^{i}(s;q_{\delta}^{-},q_{\delta}^{+})|^2}{2} +\dfrac{1}{|\xi^{i}(s;q_{\delta}^{-},q_{\delta}^{+})|} \right) ds \\
& +\delta^{2} \int_{-\sigma^-_\delta}^{\sigma^+_\delta}
U\left(\delta^{3/2}s+t_0, \xi^{i}(s;q_{\delta}^{-},q_{\delta}^{+}) \right) ds.
\end{aligned}
\]
By Lemma ~\ref{prop-livellocollisione} and Statements ii) and iii) of Lemma~\ref{prop-archi}, we deduce that
\[
\lim_{\delta \to 0^{+}} \dfrac{ A_{[t_{0}-t_{\delta}^{-},t_{0}+t_{\delta}^{+}]}(y_{\delta}^{i}) }{\delta^{1/2}} =
\varphi_{0}^{i}(x_{0}^{-},x_{0}^{+}) < \varphi_{0}
\le \liminf_{\delta\to 0^{+}} \dfrac{A_{[t_0-t_\delta^-,t_0+t_\delta^+]} (x)}{\delta^{1/2}}, \quad i = 1,2.
\] 
Therefore, we have that $ A_{T}(y_{\delta}^{i}) < A_{T}(x) $ for $ i=1,2 $ and every $ \delta $ sufficiently small,
which contradicts the minimality of $ x $.
\end{proof}

\end{document}